\newtheorem{theorem}{Theorem}
\newtheorem{definition}{Definition}
\newtheorem{example}{Example}
\newtheorem{remark}{Remark}
\newtheorem{problem}{Problem}
\begin{document}

\title{Fractional Isoperimetric Noether's Theorem\\
in the Riemann--Liouville Sense\footnote{Submitted 12-Oct-2012;
revised 05-Jan-2013; accepted 23-Jan-2013; for publication 
in \emph{Reports on Mathematical Physics}.}}


\author{Gast\~{a}o S. F. Frederico\footnote{On leave from
Department of Science and Technology,
University of Cape Verde, Praia, Santiago, Cape Verde.
Email: gastao.frederico@docente.unicv.edu.cv}\\
{\tt gastao.frederico@ua.pt}
\and Delfim F. M. Torres\\
{\tt delfim@ua.pt}}

\date{CIDMA -- Center for Research and Development in Mathematics and Applications,\\
Department of Mathematics, University of Aveiro, 3810-193 Aveiro, Portugal}


\maketitle


\begin{abstract}
We prove Noether-type theorems for fractional isoperimetric variational problems
with Riemann--Liouville derivatives. Both Lagrangian and
Hamiltonian formulations are obtained. Illustrative examples,
in the fractional context of the calculus of variations,
are discussed.

\bigskip

\noindent {\bf Keywords:} calculus of variations; isoperimetric constraints;
fractional calculus; variational principles of physics;
invariance; Noether's theorem.

\end{abstract}


\section{Introduction}

During the last fifteen years, the fractional calculus of variations
and fractional mechanics have increasingly attracted
the attention of many researchers --- see, e.g.,
\cite{MR2421931,CD:FredericoTorres:2006,Rev:04,Rev:05,Rev:06,MyID:206,MyID:221,MyID:225}
and references therein. For the state of the art,
we refer to the recent book \cite{book:frac}.

One of the oldest and interesting class of variational problems are the
isoperimetric problems \cite{CD:Bruce:2004}. Isoperimetry in
mathematical physics has roots in the Queen Dido problem
of the calculus of variations, and has recently been subject
to several investigations in the context of fractional calculus
\cite{CD:RiRuDe:2011,MyID:217,MyID:207,MyID:226}.
Here we prove Noether-like theorems for fractional isoperimetric problems
of the calculus of variations, both in Lagrangian (Theorem~\ref{theo:TNfRL})
and Hamiltonian (Theorem~\ref{thm:mainResult:FDA06}) forms.

Noether's universal principle establishes a relation between the existence
of symmetries and the existence of conservation laws, and is
one of the most beautiful results of the calculus of variations and mechanics
\cite{MyID:174,Torres:2004} and optimal control
\cite{Rev:03,CD:JMS:Torres:2002a,MR1901565}.
Noether's principle has been proved as a theorem
in various contexts \cite{MyID:106,MyID:028}.
What is important to remark here is that Noetherian
conservation laws appear naturally in closed systems,
and that in practical terms such systems do not exist:
forces that do not store energy, so-called non-conservative or
dissipative forces, are always present in real systems.
In presence of external non-conservative
forces, Noether's theorem and respective conservation laws cease
to be valid. However, it is still possible to obtain a Noether-type
theorem which covers both conservative (closed system) and
non-conservative cases.
Roughly speaking, one can prove that Noether's conservation laws are
still valid if a new term, involving the non-conservative forces, is
added to the standard constants of motion \cite{MyID:062}.
The seminal work \cite{CD:FredericoTorres:2007} makes use of the notion
of fractional Euler--Lagrange extremal introduced by
\cite{CD:Riewe:1996,CD:Riewe:1997} to prove a Noether-type theorem that
combines conservative and non-conservative cases. Another fractional
Noether-type theorem is found in \cite{Atanack}.
Fractional versions of Noether's theorem for isoperimetric problems
are the subject of the present work.

The text is organized in four sections. Section~\ref{sec:fdRL}
recalls the definitions from fractional calculus needed in the sequel
and fix the notations. Our results are formulated and proved
in Section~\ref{sec-ELRL}: we use a fractional operator
to generalize the classical concept of conservation law in mechanics
and we obtain a general fractional version of Noether's theorem
valid along the fractional isoperimetric
Euler--Lagrange extremals (Theorem~\ref{theo:TNfRL});
then we consider a more general fractional isoperimetric optimal control problem,
obtaining the corresponding fractional Noether's theorem in Hamiltonian form
(Theorem~\ref{thm:mainResult:FDA06}). Section~\ref{sub:sec:ex}
illustrates and discusses the new results with examples.


\section{Preliminaries on Fractional Calculus}
\label{sec:fdRL}

In this section we fix notations by collecting the
necessary definitions of fractional derivatives
in the sense of Riemann--Liouville
\cite{Rev:02,CD:Hilfer:2000,Kilbas,CD:Podlubny:1999}.

\begin{definition}(Riemann--Liouville fractional integrals)
Let $f$ be defined on the interval $[a,b]$.
For $t \in [a,b]$, the left Riemann--Liouville fractional integral
$_aI_t^\alpha f$ and the right Riemann--Liouville fractional
integral $_tI_b^\alpha f$ of order $\alpha$, $\alpha >0$,
are defined by
\begin{gather*}
_aI_t^\alpha f(t)
=\frac{1}{\Gamma(\alpha)}\int_a^t (t-\theta)^{\alpha-1}f(\theta)d\theta \, ,\\
_tI_b^\alpha f(t) = \frac{1}{\Gamma(\alpha)}\int_t^b
(\theta-t)^{\alpha-1}f(\theta)d\theta \, ,
\end{gather*}
where $\Gamma$ is the Euler gamma function.
\end{definition}

\begin{definition}(Riemann--Liouville derivatives)
Let $f$ be defined on the interval $[a,b]$.
For $t \in [a,b]$, the left Riemann--Liouville fractional
derivative $_aD_t^\alpha f$ and the right Riemann--Liouville
fractional derivative $_tD_b^\alpha f$ of order $\alpha$ are defined by
\begin{equation}
\label{eq:DFRLE}
\begin{split}
_aD_t^\alpha f(t)&={D^n}_aI_t^{n-\alpha} f(t)\\
&=\frac{1}{\Gamma(n-\alpha)}\left(\frac{d}{dt}\right)^{n}
\int_a^t (t-\theta)^{n-\alpha-1}f(\theta)d\theta \, ,
\end{split}
\end{equation}
and
\begin{equation}
\label{eq:DFRLD}
\begin{split}
_tD_b^\alpha f(t) &= {(-D)^n}_tI_b^{n-\alpha}f(t)\\
&=\frac{1}{\Gamma(n-\alpha)}\left(-\frac{d}{dt}\right)^{n}
\int_t^b(\theta - t)^{n-\alpha-1}f(\theta)d\theta \, ,
\end{split}
\end{equation}
where $n \in \mathbb{N}$ is such that
$n-1 \leq \alpha < n$, and $D$ is the usual derivative.
\end{definition}

\begin{remark}
If $\alpha$ is an integer, then from \eqref{eq:DFRLE}
and \eqref{eq:DFRLD} one obtains the standard derivatives, that is,
\begin{equation*}
\label{eq:DU}
_aD_t^\alpha f(t) = \left(\frac{d}{dt}\right)^\alpha f(t) \, , \quad
_tD_b^\alpha f(t) = \left(-\frac{d}{dt}\right)^\alpha f(t) \, .
\end{equation*}
\end{remark}

\begin{theorem}
Let $f$ and $g$ be two continuous functions on $[a,b]$
and $p>0$. The following property holds
for all $t \in [a,b]$:
$_aD_t^p\left(f(t)+g(t)\right)
= {_aD_t^p}f(t)+{_aD_t^p}g(t)$.
\end{theorem}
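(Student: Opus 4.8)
The plan is to exploit the fact that the Riemann--Liouville derivative is, by its very definition~\eqref{eq:DFRLE}, the composition of two operators that are each manifestly linear: the fractional integral $_aI_t^{n-p}$, which is an ordinary integral against the kernel $(t-\theta)^{n-p-1}/\Gamma(n-p)$, followed by the $n$-th classical derivative $D^n$. Since a composition of linear operators is again linear, the identity should follow by merely unfolding the definition and invoking the linearity of each building block, with no nontrivial analysis required.

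Concretely, I would first fix $n \in \mathbb{N}$ with $n-1 \le p < n$ and write out $_aD_t^p\bigl(f+g\bigr)$ directly from~\eqref{eq:DFRLE}, so that the integrand carries the factor $f(\theta)+g(\theta)$. The continuity of $f$ and $g$ on $[a,b]$ guarantees that the integral $\int_a^t (t-\theta)^{n-p-1}\bigl(f(\theta)+g(\theta)\bigr)\,d\theta$ is well defined for every $t \in [a,b]$, since the singularity $(t-\theta)^{n-p-1}$ is integrable for $p < n$. The next step is to split the integral of a sum into the sum of the integrals, which is the linearity of the (improper) integral, and then to carry the operator $D^n$ through the resulting finite sum using linearity of the classical derivative. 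Recognizing the two terms so obtained as $_aD_t^p f$ and $_aD_t^p g$ respectively then yields the claimed equality.

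The only point deserving any attention is the interchange of $D^n$ with the two-term sum: this is legitimate precisely when each summand is itself $n$-times differentiable, that is, when $_aD_t^p f$ and $_aD_t^p g$ exist --- which is implicitly assumed in the statement, since otherwise its right-hand side would be meaningless. I do not expect a genuine obstacle here; the result is a direct consequence of applying linearity at each of the two stages of the defining composition.
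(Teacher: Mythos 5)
The paper states this linearity property without proof, as a standard preliminary fact recalled from the fractional-calculus literature; your argument --- unfolding definition \eqref{eq:DFRLE} and applying linearity first of the convolution integral against the kernel $(t-\theta)^{n-p-1}/\Gamma(n-p)$ and then of the classical operator $D^n$ --- is exactly the routine verification the authors omit, and it is correct. Your closing caveat is also the right one: splitting $D^n$ across the two-term sum requires that each summand $_aI_t^{n-p}f$ and $_aI_t^{n-p}g$ be $n$-times differentiable, which mere continuity of $f$ and $g$ does not by itself guarantee but which the statement implicitly presupposes by writing $_aD_t^p f$ and $_aD_t^p g$ on the right-hand side.
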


\begin{remark}
In general, the Riemann--Liouville fractional
derivative of a constant $c$ is not equal to zero.
More precisely, one has
$$
_aD_t^\alpha(c) = \frac{c}{\Gamma(1-\alpha)} (t - a)^{-\alpha}.
$$
\end{remark}

\begin{remark}
The left Riemann--Liouville fractional derivative of order $p>0$
of function $(t-a)^\upsilon$, $\upsilon>-1$,
is given by
\begin{equation*}
_aD_t^p(t-a)^\upsilon
= \frac{\Gamma(\upsilon+1)}{\Gamma(-p+\upsilon+1)}(t-a)^{\upsilon-p} \, .
\end{equation*}
\end{remark}

The reader interested in additional background
on fractional calculus and more general fractional
operators is referred to \cite{Rev:01,Virginia94,Virginia08,MyID:226}.
For applications in physics see \cite{CD:Hilfer:2000}.


\section{Main Results}
\label{sec-ELRL}

In \cite{CD:RiRuDe:2011} a formulation of the
Euler--Lagrange equations was given for isoperimetric problems
of the calculus of variations with fractional derivatives
in the sense of Riemann--Liouville.
Here we prove a fractional version of Noether's theorem
valid along the fractional isoperimetric
Euler--Lagrange extremals. For that we
introduce an appropriate fractional operator that allow us
to generalize the classical concept of conservation law.
Under the extended fractional notion of conservation law,
we begin by proving in \S\ref{sub:sec:CM} a fractional
Noether theorem without changing the time
variable $t$, \textrm{i.e.}, without transformation
of the independent variable (Theorem~\ref{theo:tnadf1}).
In \S\ref{sub:sec:NT} we proceed with a time-reparameterization
technique to obtain the fractional Noether's theorem in its general form
(Theorem~\ref{theo:TNfRL}). Finally, in \S\ref{sub:sec:OC}
we consider more general fractional isoperimetric optimal control problems,
obtaining the corresponding fractional Noether's theorem in Hamiltonian form
(Theorem~\ref{thm:mainResult:FDA06}).


\subsection{On the fractional isoperimetric
Riemann--Liouville conservation of momentum}
\label{sub:sec:CM}

We begin by defining the fractional
isoperimetric problem under consideration.

\begin{problem}(The fractional isoperimetric problem)
\label{Pb1}
The fractional isoperimetric  problem
of the calculus of variations in the sense of Riemann--Liouville
consists to find the stationary functions of the functional
\begin{equation}
\label{Pf}
I[q(\cdot)] = \int_a^b L\left(t,q(t),{_aD_t^\alpha} q(t)\right) dt
\end{equation}
subject to $k \in \mathbb{N}$ isoperimetric equality constraints
\begin{equation}
\label{CT}
\int_a^b
g_j\left(t,q(t),{_aD_t^\alpha} q(t)\right) dt=l_j,\quad j=1,\ldots,k,
\end{equation}
and $2n$ boundary conditions
\begin{equation}
\label{bc}
q(a)=\phi\,,\quad q(b)=\psi,
\end{equation}
where $[a,b] \subset \mathbb{R}$, $a<b$,
$0 < \alpha< 1$, $l_j$, $j=1,\ldots,k$, are $k$
specified real constants, and the admissible functions
$q: t \mapsto q(t)$ and the Lagrangian
$L : (t,q,v_l) \mapsto L(t,q,v_l)$ are assumed to be
functions of class $C^2$:
\begin{gather*}
q(\cdot) \in C^2\left([a,b];\,\mathbb{R}^n \right),\\
L(\cdot,\cdot,\cdot) \in  C^2\left([a,b]\times\mathbb{R}^n
\times \mathbb{R}^n;\,\mathbb{R}\right).
\end{gather*}
\end{problem}

\begin{remark}
When $\alpha \rightarrow 1$,
Problem~\ref{Pb1} is reduced to
the classical isoperimetric problem
of the calculus of variations:
\begin{gather}
I[q(\cdot)] = \int_a^b L\left(t,q(t),\dot{q}(t)\right) dt
\longrightarrow \min, \label{isocv}\\
\int_a^b g_j\left(t,q(t),\dot{q}(t)\right) dt=l_j, \label{CT1}
\end{gather}
$j=1,\ldots,k$, subject to the boundary conditions \eqref{bc}.
For a modern account to isoperimetric variational problems
see \cite{MyID:136,MyID:125,MyID:206}.
\end{remark}

The arguments of the calculus of variations assert that by
using the Lagrange multiplier rule,
Problem~\ref{Pb1} is equivalent to the following
augmented problem \cite[$\S12.1$]{CD:Gel:1963}: to minimize
\begin{equation}
\label{agp}
\begin{split}
I[q(\cdot),\lambda]
&= \int_a^b F\left(t,q(t),{_aD_t^\alpha} q(t),\lambda\right)dt\\
&:=\int_a^b
\left[L\left(t,q(t),{_aD_t^\alpha} q(t)\right)
-\lambda \cdot g\left(t,q(t),{_aD_t^\alpha} q(t)\right)\right] dt
\end{split}
\end{equation}
subject to \eqref{bc}. The augmented Lagrangian
\begin{equation}
\label{eq:aug:Lag}
F:=L-\lambda \cdot g,
\end{equation}
$\lambda =\left(\lambda_1, \ldots,\lambda_k\right)\in\mathbb{R}^k$,
has an important role in our study.

The notion of extremizer (a local minimizer or a local maximizer)
to Problem~\ref{Pb1} is found in \cite{CD:RiRuDe:2011}. Extremizers
can be classified as normal or abnormal.

\begin{definition}
An extremizer of Problem~\ref{Pb1} that does not satisfy
\begin{equation}
\label{eq:frac:EL}
\partial_2 g\left(t,q(t),{_aD_t^\alpha} q(t)\right)
+ {_tD_b^\alpha}\partial_3 g\left(t,q(t),{_aD_t^\alpha} q(t)\right) = 0,
\end{equation}
where $\partial_i g$ denotes the partial derivative of
$g(\cdot,\cdot,\cdot)$ with respect to its $i$th argument,
is said to be a normal extremizer; otherwise
(i.e., if it satisfies \eqref{eq:frac:EL} for all $t\in[a,b]$),
is said to be abnormal.
\end{definition}

Next theorem summarizes the main result of \cite{CD:RiRuDe:2011}.

\begin{theorem}(see \cite{CD:RiRuDe:2011})
\label{Thm:FractELeq1}
If $q(\cdot)$ is a normal extremizer
to Problem~\ref{Pb1}, then it satisfies the
following \emph{fractional isoperimetric Euler--Lagrange
equation in the sense of Riemann--Liouville}:
\begin{equation}
\label{eq:eldf}
\partial_{2} F\left(t,q(t),{_aD_t^\alpha q(t)},\lambda\right)
+ {_tD_b^\alpha}\partial_{3} F\left(t,q(t),{_aD_t^\alpha q(t)},\lambda\right)  = 0,
\end{equation}
$t \in [a,b]$, where $F$ is the augmented Lagrangian \eqref{eq:aug:Lag}
associated with Problem~\ref{Pb1}.
\end{theorem}

\begin{remark}
When $\alpha \rightarrow 1$, the fractional isoperimetric Euler--Lagrange
equation \eqref{eq:eldf} is reduced to the classical
isoperimetric Euler--Lagrange equation
\begin{equation*}
\partial_{2} F\left(t,q(t),\dot{q}(t),\lambda\right)
-\frac{d}{dt}\partial_3 F\left(t,q(t),\dot{q}(t),\lambda\right)=0
\end{equation*}
(see, e.g., \cite[$\S4.2$]{CD:Bruce:2004}).
\end{remark}

Theorem~\ref{Thm:FractELeq1} leads to the concept of
isoperimetric fractional extremal in the sense of Riemann--Liouville.

\begin{definition}(Fractional isoperimetric extremal)
\label{def:seC}
A function $q(\cdot)$ that is a solution of \eqref{eq:eldf}
is said to be a \emph{fractional isoperimetric Riemann--Liouville extremal}
for Problem~\ref{Pb1}.
\end{definition}

In order to prove a fractional isoperimetric Noether's theorem,
we adopt a technique used in
\cite{CD:FredericoTorres:2007,CD:FredericoTorres:2010,CD:JMS:Torres:2002a}.
For that, we use \eqref{agp} to introduce the notion of variational
invariance and formulate a necessary condition
of invariance without transformation
of the independent variable $t$.

\begin{definition}(Invariance of \eqref{agp} without transforming $t$)
\label{def:inv1:MR}
Functional \eqref{agp} is invariant under an $\varepsilon$-parameter
group of infinitesimal transformations
$\bar{q}(t)= q(t) + \varepsilon\xi(t,q) + o(\varepsilon)$ if
\begin{equation}
\label{eq:invdf}
\int_{t_{a}}^{t_{b}} F\left(t,q(t),{_aD_t^\alpha q(t)},\lambda\right) dt
= \int_{t_{a}}^{t_{b}} F\left(t,\bar{q}(t),{_aD_t^\alpha
\bar{q}(t)},\lambda\right) dt
\end{equation}
for any subinterval $[{t_{a}},{t_{b}}] \subseteq [a,b]\,.$
\end{definition}

The next theorem establishes
a necessary condition of invariance.

\begin{theorem}(Necessary condition of invariance)
If functional \eqref{agp} is invariant in the sense of
Definition~\ref{def:inv1:MR}, then
\begin{equation}
\label{eq:cnsidf}
\partial_{2} F\left(t,q(t),{_aD_t^\alpha q(t)},\lambda\right) \cdot \xi(t,q(t))
+ \partial_{3} F\left(t,q(t),{_aD_t^\alpha q(t)},\lambda\right)
\cdot {_aD_t^\alpha \xi(t,q(t))} = 0.
\end{equation}
\end{theorem}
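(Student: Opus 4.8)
The plan is to linearize the invariance identity \eqref{eq:invdf} in the parameter $\varepsilon$ about $\varepsilon = 0$. The left-hand side of \eqref{eq:invdf} does not depend on $\varepsilon$, so its derivative with respect to $\varepsilon$ vanishes identically; hence I would differentiate the right-hand side with respect to $\varepsilon$ and evaluate at $\varepsilon = 0$. The hypotheses that $q(\cdot)$ and $L$ (and therefore the augmented Lagrangian $F$) are of class $C^2$ justify differentiation under the integral sign, so this produces an identity of the form $\int_{t_a}^{t_b} (\cdots)\, dt = 0$ valid for \emph{every} subinterval $[t_a, t_b] \subseteq [a,b]$.

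First I would record the first-order expansions induced by the transformation $\bar q(t) = q(t) + \varepsilon\,\xi(t,q) + o(\varepsilon)$. Differentiating in $\varepsilon$ at $\varepsilon = 0$ gives $\frac{\partial \bar q}{\partial \varepsilon}\big|_{\varepsilon=0} = \xi(t,q(t))$. For the fractional argument I would invoke the additivity of the left Riemann--Liouville derivative established earlier in the excerpt, together with ${_aD_t^\alpha}(\varepsilon\,\xi) = \varepsilon\,{_aD_t^\alpha}\xi$, to write ${_aD_t^\alpha}\bar q(t) = {_aD_t^\alpha} q(t) + \varepsilon\,{_aD_t^\alpha}\xi(t,q) + o(\varepsilon)$, whence $\frac{\partial}{\partial\varepsilon}\,{_aD_t^\alpha}\bar q(t)\big|_{\varepsilon=0} = {_aD_t^\alpha}\xi(t,q(t))$.

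Next I would apply the chain rule to $F(t,\bar q,\,{_aD_t^\alpha}\bar q,\lambda)$, differentiating through its second and third arguments (the first and fourth do not depend on $\varepsilon$). Substituting the two derivatives just computed yields the integrand $\partial_2 F\cdot\xi + \partial_3 F\cdot{_aD_t^\alpha}\xi$, so the linearized identity reads $\int_{t_a}^{t_b}\big[\partial_2 F\cdot\xi + \partial_3 F\cdot{_aD_t^\alpha}\xi\big]\,dt = 0$ for all $[t_a,t_b]\subseteq[a,b]$. Since the subinterval is arbitrary, I would conclude by the fundamental lemma of the calculus of variations (equivalently, by differentiating with respect to the upper limit $t_b$) that the integrand vanishes pointwise, which is precisely \eqref{eq:cnsidf}.

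The step I expect to be the main obstacle is the legitimacy of interchanging the $\varepsilon$-differentiation with the nonlocal operator ${_aD_t^\alpha}$: unlike the ordinary derivative, the fractional derivative is an integro-differential operator, so one must confirm that passing $\partial/\partial\varepsilon$ inside it is justified. This is exactly where the linearity (additivity and homogeneity) of ${_aD_t^\alpha}$ recorded earlier, combined with the $C^2$ regularity of the data, does the work: the $o(\varepsilon)$ remainder in $\bar q$ is carried through the linear operator without spoiling the first-order expansion.
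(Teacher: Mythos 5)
Your proposal is correct and follows essentially the same route as the paper: both arguments rest on the arbitrariness of the subinterval $[t_a,t_b]$ to localize, the linearity of ${_aD_t^\alpha}$ to get $\frac{\partial}{\partial\varepsilon}\,{_aD_t^\alpha}\bar q\big|_{\varepsilon=0}={_aD_t^\alpha}\xi$, and the chain rule applied to $F$. The only (immaterial) difference is the order of operations --- the paper strips the integral signs first and then differentiates in $\varepsilon$, whereas you differentiate under the integral and then localize.
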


\begin{proof}
Having in mind that condition \eqref{eq:invdf} is valid for any
subinterval $[{t_{a}},{t_{b}}] \subseteq [a,b]$, we can get rid
off the integral signs in \eqref{eq:invdf}. Differentiating this
condition with respect to $\varepsilon$, then substituting
$\varepsilon=0$, and using the definitions and properties of the
fractional derivatives given in Section~\ref{sec:fdRL}, we arrive
to the intended conclusion:
\begin{multline}
\label{eq:SP}
0 = \partial_{2} (L-\lambda \cdot g)\left(t,q,{_aD_t^\alpha q}\right)\cdot\xi(t,q)
+ \partial_{3} (L-\lambda \cdot g)\left(t,q,{_aD_t^\alpha q}\right)\\
\times \frac{d}{d\varepsilon} \Biggl[\frac{1}{\Gamma(n-\alpha)}
\left(\frac{d}{dt}\right)^{n} \int_a^t (t-\theta)^{n-\alpha-1}q(\theta)d\theta \\
+\frac{\varepsilon}{\Gamma(n-\alpha)}\left(\frac{d}{dt}\right)^{n}\int_a^t
(t-\theta)^{n-\alpha-1}\xi(\theta,q)d\theta\Biggr]_{\varepsilon=0}.
\end{multline}
Expression \eqref{eq:SP} is equivalent to \eqref{eq:cnsidf}.
\end{proof}

The following definition is useful in order to introduce an
appropriate concept of \emph{fractional isoperimetric conservation law
in the sense of Riemann--Liouville}.

\begin{definition}(cf. Definition~19 of \cite{CD:FredericoTorres:2007})
\label{def:oprl}
Given two functions $f$ and $h$ of class
$C^1$ in the interval $[a,b]$, we introduce the following
operator:
\begin{equation*}
\mathcal{D}_{t}^{\gamma}\left(f,h\right) = -h \cdot {_tD_b^\gamma} f
+ f \cdot {_aD_t^\gamma} h \, ,
\end{equation*}
where $t \in [a,b]$ and $\gamma \in \mathbb{R}_0^+$.
\end{definition}

\begin{remark}
\label{rem:oprl}
In the classical context one has $\gamma=1$ and
$$
\mathcal{D}_{t}^{1}\left(f,h\right)=f' \cdot h + f \cdot h' \\
= \frac{d}{dt}(f \cdot h)=\mathcal{D}_{t}^{1}\left(h,f\right) \, .
$$
Roughly speaking, $\mathcal{D}_{t}^{\gamma}\left(f,h\right)$
is a fractional version of the derivative of the product
of $f$ with $h$. Differently from the classical context,
in the fractional case one has, in general,
$\mathcal{D}_{t}^{\gamma}\left(f,h\right)
\ne \mathcal{D}_{t}^{\gamma}\left(h,f\right)$.
We recall that the Leibniz formula, as we know it from standard calculus,
is not valid for fractional derivatives.
\end{remark}

We now prove the fractional isoperimetric Noether's theorem in the
sense of Riemann--Liouville without transformation
of the independent variable $t$.

\begin{theorem}(The Noether law of fractional momentum)
\label{theo:tnadf1}
If \eqref{agp} is invariant in the
sense of Definition~\ref{def:inv1:MR}, then
\begin{equation}
\label{eq:LC:Frac:RL}
\mathcal{D}_{t}^{\alpha}\left[\partial_{3}
F\left(t,q(t),{_aD_t^\alpha q(t)},\lambda\right),\xi(t,q(t))\right] = 0
\end{equation}
along any fractional isoperimetric Riemann--Liouville extremal $q(t)$, $t \in [a,b]$
(Definition~\ref{def:seC}).
\end{theorem}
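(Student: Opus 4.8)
The plan is to combine the two ingredients already established: the necessary condition of invariance \eqref{eq:cnsidf} and the fractional isoperimetric Euler--Lagrange equation \eqref{eq:eldf}. The target quantity $\mathcal{D}_t^\alpha\left[\partial_3 F, \xi\right]$ is, by Definition~\ref{def:oprl}, a combination of a right and a left Riemann--Liouville derivative, so the strategy is to unfold it and rewrite its two terms until each matches one of these two known identities. In effect the theorem is a purely algebraic consequence of facts already in hand, and the work is bookkeeping rather than analysis.

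First I would unfold the operator according to Definition~\ref{def:oprl}, writing
\begin{equation*}
\mathcal{D}_t^\alpha\left[\partial_3 F, \xi\right]
= -\,\xi \cdot {_tD_b^\alpha}\partial_3 F + \partial_3 F \cdot {_aD_t^\alpha}\xi,
\end{equation*}
where every term is understood to be evaluated along the extremal $q(t)$ and at $\lambda$. The advantage of this form is that the first term isolates exactly the right derivative ${_tD_b^\alpha}\partial_3 F$ that occurs in the Euler--Lagrange equation \eqref{eq:eldf}.

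Next, since $q(\cdot)$ is a fractional isoperimetric Riemann--Liouville extremal, it satisfies \eqref{eq:eldf}, which I would rearrange as ${_tD_b^\alpha}\partial_3 F = -\,\partial_2 F$. Substituting this into the first term converts it into $\xi \cdot \partial_2 F$, so that
\begin{equation*}
\mathcal{D}_t^\alpha\left[\partial_3 F, \xi\right]
= \partial_2 F \cdot \xi + \partial_3 F \cdot {_aD_t^\alpha}\xi.
\end{equation*}
The right-hand side is precisely the left-hand side of the necessary condition of invariance \eqref{eq:cnsidf}, which vanishes under the hypothesis that functional \eqref{agp} is invariant. Hence the whole expression is zero along the extremal, which is the assertion.

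The computation is short, so the point to watch is legitimacy rather than difficulty. I would confirm that the operator $\mathcal{D}_t^\alpha$ is well defined by checking that $\partial_3 F\left(t,q(t),{_aD_t^\alpha}q(t),\lambda\right)$ and $\xi(t,q(t))$ are of class $C^1$, which follows from the $C^2$ regularity assumed in Problem~\ref{Pb1} together with the structure $F = L - \lambda\cdot g$; and I would note that both \eqref{eq:cnsidf} and \eqref{eq:eldf} hold pointwise along the same extremal, so the substitution is valid at every $t \in [a,b]$. If any obstacle arises, it is exactly this matching of smoothness and domain hypotheses needed for the right Riemann--Liouville derivative to act on $\partial_3 F$; the algebra linking the three displayed identities is otherwise immediate.
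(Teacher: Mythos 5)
Your proof is correct and is essentially the paper's own argument run in the opposite direction: the paper substitutes the Euler--Lagrange equation \eqref{eq:eldf} into the invariance condition \eqref{eq:cnsidf} and recognizes the result as $\mathcal{D}_t^\alpha(\partial_3 F,\xi)$, while you unfold $\mathcal{D}_t^\alpha(\partial_3 F,\xi)$ and reduce it to \eqref{eq:cnsidf}; the two ingredients and the algebra are identical.
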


\begin{proof}
We use the fractional Euler--Lagrange equations
\begin{equation*}
\partial_{2} (L-\lambda \cdot g)\left(t,q,{_aD_t^\alpha q}\right)
= -{_{t}D_b^\alpha}\partial_{3} (L-\lambda \cdot g)\left(t,q,{_aD_t^\alpha q}\right)
\end{equation*}
in \eqref{eq:cnsidf}, obtaining
\begin{equation*}
\begin{split}
0&=-{_{t}D_b^\alpha}\partial_{3}
(L-\lambda \cdot g)\left(t,q,{_aD_t^\alpha q}\right)\cdot\xi(t,q)
+\partial_{3} (L-\lambda \cdot g)\left(
t,q,{_aD_t^\alpha q}\right)\cdot{_aD_t^\alpha \xi(t,q)}\\
&= \mathcal{D}_{t}^{\alpha}\left(\partial_{3} (L-\lambda
\cdot g)\left(t,q,{_aD_t^\alpha q}\right),\xi(t,q)\right) \, .
\end{split}
\end{equation*}
The proof is complete.
\end{proof}

\begin{remark}
\label{rem:22}
When $\alpha  \rightarrow 1$, we obtain from
\eqref{eq:LC:Frac:RL} the following conservation law
applied to the isoperimetric problem \eqref{isocv}--\eqref{CT1}:
\begin{equation*}
\frac{d}{dt} \left[ \partial_{3}
F\left(t,q(t),\dot{q}(t),\lambda\right)\cdot\xi(t,q(t)) \right] = 0
\end{equation*}
along any isoperimetric Euler--Lagrange extremal $q(\cdot)$.
For this reason, we call to the fractional isoperimetric law \eqref{eq:LC:Frac:RL}
\emph{the fractional isoperimetric Riemann--Liouville conservation of momentum}.
\end{remark}


\subsection{The fractional isoperimetric Noether theorem}
\label{sub:sec:NT}

The next definition gives a more general notion
of invariance for the integral functional \eqref{agp}.
The main result of this section,
Theorem~\ref{theo:TNfRL}, is formulated
with the help of this definition.

\begin{definition}(Invariance of \eqref{agp})
\label{def:invadf} The integral functional \eqref{agp}
is said to be invariant under the one-parameter group
of infinitesimal transformations
\begin{equation}
\label{eq:tinf2}
\begin{cases}
\bar{t} = t + \varepsilon\tau(t,q) + o(\varepsilon) \, ,\\
\bar{q}(t) = q(t) + \varepsilon\xi(t,q) + o(\varepsilon) \, ,\\
\end{cases}
\end{equation}
if
\begin{equation*}
\int_{t_{a}}^{t_{b}} F\left(t,q(t),{_aD_t^\alpha q(t)},\lambda\right) dt
= \int_{\bar{t}(t_a)}^{\bar{t}(t_b)}
F\left(\bar{t},\bar{q}(\bar{t}), {_aD_t^\alpha
\bar{q}(\bar{t})},\lambda\right) d\bar{t}
\end{equation*}
for any subinterval $[{t_{a}},{t_{b}}] \subseteq [a,b]$.
\end{definition}

Our next theorem gives a formulation of Noether's principle
to fractional isoperimetric problems of the calculus
of variations in the sense of Riemann--Liouville.

\begin{theorem}(Fractional isoperimetric Noether's theorem)
\label{theo:TNfRL}
If the integral functional \eqref{agp} is invariant
in the sense of Definition~\ref{def:invadf}, then
\begin{multline}
\label{eq:tndf}
\mathcal{D}_{t}^{\alpha}\left(F\left(t,q,{_aD_t^\alpha q},\lambda\right)
- \alpha\partial_{3} F\left(t,q,{_aD_t^\alpha q},\lambda\right)
\cdot{_aD_t^\alpha q}, \tau(t,q)\right)\\
+ \mathcal{D}_{t}^{\alpha}\left(\partial_{3}
F\left(t,q,{_aD_t^\alpha q},\lambda\right), \xi(t,q)\right) = 0
\end{multline}
along any fractional isoperimetric Riemann--Liouville extremal $q(\cdot)\,.$
\end{theorem}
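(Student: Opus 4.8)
The plan is to reduce the general statement, which involves a genuine transformation of the independent variable, to the already-established momentum law of Theorem~\ref{theo:tnadf1}, which covers only transformations of $q$. The device for this reduction is the time-reparameterization technique announced in the introduction: I would promote the time $t$ to the status of a dependent variable and introduce a fresh, fixed independent parameter, so that the time transformation in \eqref{eq:tinf2} is converted into an ordinary transformation of dependent variables.

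Concretely, first I would introduce a Lipschitzian one-to-one map $s \mapsto t(s)$ on a fixed interval $[s_a,s_b]$, with $t(s_a)=t_a$, $t(s_b)=t_b$, and write $t'=\frac{dt}{ds}$. Changing variables $t=t(s)$ in \eqref{agp} turns the functional into $\bar{I}=\int_{s_a}^{s_b}F(t,q,{_aD_t^\alpha}q,\lambda)\,t'\,ds$, where now both $t$ and $q$ are regarded as functions of $s$. The essential ingredient is the transformation rule for the Riemann--Liouville derivative under this reparameterization, namely ${_aD_t^\alpha}q=(t')^{-\alpha}\,{_{s_a}D_s^\alpha}q$, which lets me rewrite the extended Lagrangian as $\bar{F}=F(t,q,(t')^{-\alpha}\,{_{s_a}D_s^\alpha}q,\lambda)\,t'$, a function of $(t,q)$ and of their $s$-velocities alone. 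Under this construction the transformations \eqref{eq:tinf2} act only on the dependent pair $(t,q)$ and leave the new independent variable $s$ untouched, so invariance in the sense of Definition~\ref{def:invadf} becomes invariance of $\bar{I}$ in the sense of Definition~\ref{def:inv1:MR}, now with generator $(\tau,\xi)$ on the enlarged configuration space.

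Next I would read off the two generalized momenta of the extended problem and apply Theorem~\ref{theo:tnadf1} in the configuration space $(t,q)$. Differentiating $\bar{F}$ in the fractional velocity of $q$ produces $(t')^{1-\alpha}\partial_3 F$, while the momentum conjugate to the time variable is $\frac{\partial \bar{F}}{\partial t'}=F-\alpha\,\partial_3 F\cdot(t')^{-\alpha}\,{_{s_a}D_s^\alpha}q$; the exponent $-\alpha$, rather than the classical $-1$, is exactly what manufactures the factor $\alpha$ in \eqref{eq:tndf}. Applying the momentum law of Theorem~\ref{theo:tnadf1} to $\bar{I}$ yields $\mathcal{D}_s^{\alpha}$ of each momentum paired with its generator, and finally reverting to the original parameterization by setting $s=t$ (so that $t'=1$ and ${_{s_a}D_s^\alpha}q\to{_aD_t^\alpha}q$) collapses the two contributions into precisely \eqref{eq:tndf}.

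The delicate point---and the one I would treat most carefully---is the fractional calculus underlying the reduction, which has no classical counterpart. First, the scaling rule ${_aD_t^\alpha}q=(t')^{-\alpha}\,{_{s_a}D_s^\alpha}q$ is exact only for affine reparameterizations, so I would either restrict to a reparameterization that is affine to the order in $\varepsilon$ that enters the conservation law, or verify that the non-affine corrections drop out after differentiating the invariance identity at $\varepsilon=0$ and evaluating at $s=t$. Second, one must justify identifying $\partial\bar{F}/\partial t'$, computed through the explicit measure factor $t'$, with the generalized momentum conjugate to the fractional time velocity that Theorem~\ref{theo:tnadf1} requires; this identification is legitimate precisely at the identity reparameterization $t'=1$, which is where the final conservation law is evaluated. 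Controlling these two points---rather than the bookkeeping, which is routine once the transformation rules are in place---is what makes the argument work.
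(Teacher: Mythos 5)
Your proposal follows essentially the same route as the paper's own proof: reparameterize time so that $t$ becomes a dependent variable, use the scaling rule ${_aD_t^\alpha}q=(t')^{-\alpha}\,{_{s_a}D_s^\alpha}q$ to rewrite the extended Lagrangian, apply Theorem~\ref{theo:tnadf1} on the enlarged configuration space $(t,q)$, and evaluate the two conjugate momenta at the identity reparameterization $t'=1$ to recover \eqref{eq:tndf}; your computed momenta agree exactly with the paper's \eqref{eq:prfMR:q1} and \eqref{eq:prfMR:q2}. The only difference is cosmetic: the paper restricts from the outset to the affine reparameterization $t(\sigma)=\sigma f(\delta)$, which is precisely the case where the scaling rule you flag as delicate holds exactly.
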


\begin{proof}
We reparameterize the time (the independent variable $t$)
with a Lipschitzian transformation
$[\sigma_a,\sigma_b]\ni \sigma \mapsto t(\sigma)=\sigma f(\delta)\in [a,b]$
that satisfies
\begin{equation}
\label{eq:condla}
t_{\sigma}^{'} =\frac{dt(\sigma)}{d\sigma}=
f(\delta) = 1\,\, \text{ if } \,\, \delta=0\,.
\end{equation}
In this way one reduces \eqref{agp} to an autonomous
integral functional:
\begin{equation}
\label{eq:tempo}
\bar{I}[t(\cdot),q(t(\cdot)),\lambda]
=\int_{\sigma_{a}}^{\sigma_{b}}
(L-\lambda \cdot g)\left(t(\sigma),q(t(\sigma)),
{_{\sigma_{a}}D_{t(\sigma)}^{\alpha}q(t(\sigma))}
\right)t_{\sigma}^{'} d\sigma ,
\end{equation}
where $t(\sigma_{a}) = a$, $t(\sigma_{b}) = b$,
\begin{equation*}
\begin{split}
_{\sigma_{a}}D_{t(\sigma)}^{\alpha}q(t(\sigma))
&=\frac{1}{\Gamma(n-\alpha)}\left(\frac{d}{dt(\sigma)}\right)^{n}
\int_{\frac{a}{f(\delta)}}^{\sigma f(\delta)}\left({\sigma
f(\delta)}-\theta\right)^{n-\alpha-1}q\left(\theta f^{-1}(\delta)\right)d\theta\\
&=\frac{(t_{\sigma}^{'})^{-\alpha}}{\Gamma(n-\alpha)}
\left(\frac{d}{d\sigma}\right)^{n}
\int_{\frac{a}{(t_{\sigma}^{'})^{2}}}^{\sigma}
(\sigma-s)^{n-\alpha-1}q(s)ds  \\
&=(t_{\sigma}^{'})^{-\alpha}{_{\frac{a}{(t_{\sigma}^{'})^{2}}}
D_{\sigma}^{\alpha}q(\sigma)}.
\end{split}
\end{equation*}
Using the definitions and properties of fractional derivatives given in
Section~\ref{sec:fdRL}, we get
\begin{equation*}
\begin{split}
\bar{I}[t(\cdot),q(t(\cdot)),\lambda]
&= \int_{\sigma_{a}}^{\sigma_{b}}
(L-\lambda \cdot g)\left(t(\sigma),q(t(\sigma)),
(t_{\sigma}^{'})^{-\alpha}{_{\frac{a}{(t_{\sigma}^{'})^{2}}}
D_{\sigma}^{\alpha}}q(\sigma)\right) t_{\sigma}^{'} d\sigma \\
&= \int_{\sigma_{a}}^{\sigma_{b}}
\left(\bar{L}_{f}-\lambda\cdot \bar{g}_{f}\right)\left(t(\sigma),
q(t(\sigma)),t_{\sigma}^{'},{_{\frac{a}{(t_{\sigma}^{'})^{2}}}
D_\sigma^\alpha} q(t(\sigma))\right)d\sigma \\
&= \int_a^b (L-\lambda \cdot g)\left(t,q(t),{_aD_t^\alpha} q(t)\right) dt\\
&= I[q(\cdot),\lambda] \, .
\end{split}
\end{equation*}
By hypothesis, functional \eqref{eq:tempo} is invariant
under transformations \eqref{eq:tinf2},
and it follows from Theorem~\ref{theo:tnadf1} that
if the integral functionals in \eqref{Pf} and \eqref{CT}
are invariant in the sense of Definition~\ref{def:invadf},
then the integral functional \eqref{eq:tempo} is invariant
in the sense of Definition~\ref{def:inv1:MR}. It follows
from Theorem~\ref{theo:tnadf1} that
\begin{equation}
\label{eq:tnadf2}
\mathcal{D}_{t}^{\alpha}\left(\partial_{4}\left(\bar{L}_{f}
-\lambda \cdot\bar{g}_{f}\right),\xi\right)
+ \mathcal{D}_{t}^{\alpha}\left(\frac{\partial}{\partial t'_\sigma}
\left(\bar{L}_{f}-\lambda \cdot\bar{g}_{f}\right),\tau\right) = 0
\end{equation}
is an isoperimetric fractional conserved law in the sense of Riemann--Liouville.
For $\delta= 0$ the condition \eqref{eq:condla} allow us to write that
\begin{equation*}
_{\frac{a}{(t_{\sigma}^{'})^{2}}}D_\sigma^\alpha q(t(\sigma))
 = {_aD_t}^\alpha q(t) \, ,
\end{equation*}
and we get
\begin{equation}
\label{eq:prfMR:q1}
\partial_{4}\left(\bar{L}_{f}-\lambda \cdot\bar{g}_{f}\right)
=\partial_{3} \left(L-\lambda \cdot g\right)  \, ,
\end{equation}
and
\begin{equation}
\label{eq:prfMR:q2}
\frac{\partial}{\partial t'_\sigma} \left(\bar{L}_{f}-\lambda\cdot \bar{g}_{f}\right)
= -\alpha\partial_{3} (L-\lambda \cdot g)\cdot{_{a}D_{t}^{\alpha}}q + L-\lambda \cdot g \, .
\end{equation}
Substituting the quantities \eqref{eq:prfMR:q1} and
\eqref{eq:prfMR:q2} into \eqref{eq:tnadf2}, we obtain the
isoperimetric fractional conservation law \eqref{eq:tndf}.
\end{proof}

\begin{remark}
\label{rem:25}
When $\alpha \rightarrow 1$, we obtain from \eqref{eq:tndf}
the isoperimetric Noether's conservation law:
\begin{equation*}
\frac{d}{dt}
\biggl[\partial_{3} F\left(t,q,\dot{q}\right)\cdot\xi(t,q)
+ \left( F(t,q,\dot{q}) - \partial_{3} F\left(t,q,\dot{q}\right)
\cdot \dot{q} \right) \tau(t,q)\biggr] = 0
\end{equation*}
along any Euler--Lagrange extremal $q$
of problem \eqref{isocv}--\eqref{CT1}.
\end{remark}


\subsection{Optimal control of fractional isoperimetric systems}
\label{sub:sec:OC}

We now adopt the Hamiltonian formalism to generalize
Theorem~\ref{theo:TNfRL} to the fractional optimal
control setting. The fractional isoperimetric
optimal control problem in the sense of Riemann--Liouville
is introduced, without loss of generality, in Lagrange form:
\begin{equation}
\label{eq:COA} I[q(\cdot),u(\cdot)] = \int_a^b
L\left(t,q(t),u(t)\right) dt \longrightarrow \min
\end{equation}
subject to the fractional differential system
\begin{equation}
\label{eq:sitRL}
 _aD_t^\alpha
q(t)=\varphi\left(t,q(t),u(t)\right),
\end{equation}
isoperimetric equality constraints
\begin{equation}
\label{CT2}
\int_a^b
g_j\left (t,q(t),u(t)\right) dt=l_j,\quad j=1,\ldots,k,
\end{equation}
and initial condition
\begin{equation}
\label{eq:COIRL}
q(a)=q_a.
\end{equation}

The Lagrangian $L :[a,b] \times \mathbb{R}^{n}\times
\mathbb{R}^{m} \rightarrow \mathbb{R}$, the fractional velocity vector
$\varphi:[a,b] \times \mathbb{R}^{n}\times \mathbb{R}^m\rightarrow
\mathbb{R}^{n}$ and $g : [a,b] \times \mathbb{R}^{n}\times \mathbb{R}^m
\rightarrow \mathbb{R}^{k}$, are assumed to be functions of class $C^{1}$
with respect to all their arguments, and $l_j$, $j=1,\ldots,k$,
are specified real constants. We also assume, without loss of
generality, that $0<\alpha\leq1$. In conformity with the calculus
of variations, we are considering that the control functions
$u(\cdot)$ take values on $\mathbb{R}^m$.

\begin{definition}
The fractional differential system
\eqref{eq:sitRL} is called a
\emph{fractional control system
in the sense of Riemann--Liouville}.
\end{definition}

\begin{remark}
\label{rem:cv:pc}
The fractional functional of the calculus of variations
\eqref{Pf} is obtained from \eqref{eq:COA}--\eqref{eq:sitRL}
by choosing $\varphi(t,q,u)=u$. In that case \eqref{CT2}
is reduced to \eqref{CT}.
\end{remark}

\begin{definition}(Fractional isoperimetric process)
An admissible pair $(q(\cdot),u(\cdot))$ that satisfies the
fractional control system \eqref{eq:sitRL} and the
fractional isoperimetric constraints \eqref{CT2}
is said to be a \emph{fractional isoperimetric process
in the sense of Riemann--Liouville}.
\end{definition}

\begin{theorem}
\label{th:AG}
If $(q(\cdot),u(\cdot))$ is a fractional isoperimetric process
in the sense of Riemann--Liouville,
solution to problem \eqref{eq:COA}--\eqref{eq:COIRL},
then there exists a co-vector function $p(\cdot)\in PC^{1}([a,b];\mathbb{R}^{n})$
such that for all $t\in [a,b]$ the
quadruple $(q(\cdot),u(\cdot),p(\cdot),\lambda)$ satisfies
the following conditions:
\begin{itemize}
\item the isoperimetric Hamiltonian system
\begin{equation*}
\label{eq:HamRL}
\begin{cases}
_aD_t^\alpha q(t) =\partial_4 {\cal H}(t, q(t), u(t),p(t),\lambda) \, , \\
_tD_b^\alpha p(t) = \partial_2{\cal H}(t,q(t),u(t), p(t),\lambda) \, ;
\end{cases}
\end{equation*}
\item the isoperimetric stationary condition
\begin{equation*}
 \partial_3 {\cal H}(t, q(t), u(t), p(t),\lambda)=0 \, ;
\end{equation*}
\end{itemize}
where the Hamiltonian ${\cal H}$ is defined  by
\begin{equation}
\label{eq:HL}
{\cal H}\left(t,q,u,p,\lambda\right)
= L\left(t,q,u\right)-\lambda \cdot g\left(t,q,u\right)
+ p \cdot \varphi\left(t,q,u\right)\,.
\end{equation}
\end{theorem}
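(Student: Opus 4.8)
The plan is to prove the statement by two successive reductions: first eliminate the isoperimetric constraints \eqref{CT2} by the Lagrange multiplier rule, and then adjoin the fractional dynamics \eqref{eq:sitRL} by a co-vector multiplier $p(\cdot)$, so that the originally constrained problem becomes an unconstrained variational problem whose stationarity conditions are exactly the three asserted relations. First I would invoke the Lagrange multiplier theorem for isoperimetric problems (the device already used in the passage preceding Theorem~\ref{Thm:FractELeq1}): assuming the process is normal, there is a vector $\lambda=(\lambda_1,\dots,\lambda_k)\in\mathbb{R}^k$ for which a constrained extremizer of \eqref{eq:COA}--\eqref{CT2} is a stationary point of the augmented cost $\int_a^b (L-\lambda\cdot g)(t,q,u)\,dt$ subject only to \eqref{eq:sitRL} and \eqref{eq:COIRL}. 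This is precisely the augmented Lagrangian \eqref{eq:aug:Lag} transported to the control setting.

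Next I would adjoin the fractional control system by a co-vector function $p(\cdot)$ and form the action
\[
J = \int_a^b \Big[ (L-\lambda\cdot g)(t,q,u) + p\cdot\big(\varphi(t,q,u)-{_aD_t^\alpha}q\big)\Big]\,dt = \int_a^b \big[ \mathcal{H}(t,q,u,p,\lambda) - p\cdot{_aD_t^\alpha}q \big]\,dt,
\]
with $\mathcal{H}$ as in \eqref{eq:HL}, after which the variations of $q$, $u$ and $p$ may be treated as independent. I would then compute the first variation along $q\mapsto q+\varepsilon h$, $u\mapsto u+\varepsilon\eta$, $p\mapsto p+\varepsilon\zeta$, where $h(a)=0$ is forced by \eqref{eq:COIRL} while $h,\eta,\zeta$ are otherwise arbitrary. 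The $\zeta$-variation returns the state equation $_aD_t^\alpha q=\varphi=\partial_4\mathcal{H}$; the $\eta$-variation gives the stationary condition $\partial_3\mathcal{H}=0$ by the fundamental lemma of the calculus of variations. The decisive term is the $h$-variation, $\int_a^b(\partial_2\mathcal{H}\cdot h - p\cdot{_aD_t^\alpha}h)\,dt$: applying the fractional integration-by-parts rule of the Riemann--Liouville calculus (Section~\ref{sec:fdRL}, see e.g. \cite{Kilbas}), which converts $\int_a^b p\cdot{_aD_t^\alpha}h\,dt$ into $\int_a^b h\cdot{_tD_b^\alpha}p\,dt$ up to boundary contributions, transfers the left derivative acting on $h$ into the right derivative acting on $p$; the fundamental lemma then yields $\partial_2\mathcal{H}={_tD_b^\alpha}p$, which is precisely the adjoint equation.

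The main obstacle I anticipate is the careful treatment of the boundary contributions generated by the fractional integration by parts. Since only the initial datum $q(a)=q_a$ is prescribed while $q(b)$ is free, the endpoint terms do not vanish automatically; I would recover the adjoint equation on the interior by restricting first to variations $h$ supported away from $b$, and then exploit the freedom at the right endpoint to absorb the residual boundary term into a transversality condition that fixes the endpoint behaviour and the $PC^1([a,b];\mathbb{R}^n)$ regularity of $p$. Establishing the existence of such a multiplier $p$, together with the normality needed for a finite $\lambda$ in the first reduction, are the genuinely technical points; granted these, the three stated conditions follow simply by reading off the coefficients of $\zeta$, $\eta$ and $h$. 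Alternatively, once the isoperimetric constraints have been absorbed into $L-\lambda\cdot g$, the whole argument can be subsumed under the fractional Pontryagin-type principle for Riemann--Liouville control systems developed in \cite{CD:FredericoTorres:2007,book:frac}.
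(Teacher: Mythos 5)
Your proposal is correct and follows essentially the same route as the paper: the paper's own proof simply forms the augmented functional $J=\int_a^b[\mathcal{H}-p\cdot{_aD_t^\alpha}q]\,dt$ via the Lagrange multiplier rule and then invokes the fractional Euler--Lagrange optimality condition \eqref{eq:eldf} on it, which yields the three stated relations as the $p$-, $u$- and $q$-components. What you add --- the explicit first-variation computation, the fractional integration by parts, and the discussion of boundary/transversality terms --- is just the unpacking of that same black-box step, not a different argument.
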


\begin{proof}
Minimizing \eqref{eq:COA} subject to
\eqref{eq:sitRL} and \eqref{CT2} is equivalent,
by the Lagrange multiplier rule,
to minimize
\begin{equation}
\label{eq:COA1}
J[q(\cdot),u(\cdot),p(\cdot),\lambda]
= \int_a^b \left[{\cal H}\left(t,q(t),u(t),p(t),\lambda\right)
-p(t) \cdot {_aD}_t^\alpha q(t)\right]dt
\end{equation}
with ${\cal H}$ given by \eqref{eq:HL}.
Theorem~\ref{th:AG} follows by applying the
fractional Euler--Lagrange optimality condition
to the equivalent functional \eqref{eq:COA1}.
\end{proof}

\begin{remark}
When $\alpha \rightarrow 1$, Theorem~\ref{th:AG}
coincides with the Pontryagin Maximum Principle for
optimal control problems with isoperimetric constraints
(cf. \cite[$\S13.12$]{MR84m:49002} and \cite[Theorem $2.1$]{MR1901565}).
\end{remark}

\begin{remark}
In the case of the fractional calculus
of variations in the sense of Riemann--Liouville
one has $\varphi(t,q,u)=u$ (Remark~\ref{rem:cv:pc})
and ${\cal H} = L-\lambda \cdot g + p \cdot u$.
From the isoperimetric Hamiltonian system of Theorem~\ref{th:AG},
one gets $_aD_t^\alpha q = u$ and
$_tD_b^\alpha  p  =\partial_2 L-\lambda\cdot\partial_2 g$,
and from the stationary condition
$\partial_3 {\cal H} = 0$ it follows that
$p= - \partial_3 L +\lambda \cdot\partial_3g$. Thus,
${_tD_b^\alpha}  p= -_tD_b^\alpha \left(\partial_3 L
-\lambda\cdot\partial_3g\right)$.
Comparing both expressions for $_tD_b^\alpha  p$, we arrive to
the fractional Euler--Lagrange equations \eqref{eq:eldf}:
$\partial_2 L-\lambda\cdot\partial_2 g
=-_tD_b^\alpha \left(\partial_3 L-\lambda\cdot\partial_3g\right)$.
\end{remark}

\begin{definition}(Fractional isoperimetric Pontryagin extremal)
\label{def:extPontRL}
A quadruple $(q(\cdot),u(\cdot),p(\cdot),\lambda)$
satisfying Theorem~\ref{th:AG} will be called a \emph{fractional
isoperimetric Pontryagin extremal in the sense of Riemann--Liouville}.
\end{definition}

The notion of variational invariance for \eqref{eq:COA}--\eqref{CT2}
is defined with the help of the augmented functional \eqref{eq:COA1}.

\begin{definition}(Variational invariance of \eqref{eq:COA1})
\label{def:inv:gt1} We say that the
integral functional \eqref{eq:COA1}
is invariant under the one-parameter
family of infinitesimal transformations
\begin{equation}
\label{eq:trf:inf}
\begin{cases}
\bar{t} = t+\varepsilon\tau(t, q(t), u(t), p(t)) + o(\varepsilon) \, , \\
\bar{q}(t) = q(t)+\varepsilon\xi(t, q(t), u(t), p(t)) + o(\varepsilon) \, , \\
\bar{u}(t) = u(t)+\varepsilon\varrho(t, q(t), u(t), p(t)) + o(\varepsilon) \, , \\
\bar{p}(t) = p(t)+\varepsilon\varsigma(t, q(t), u(t), p(t))+ o(\varepsilon) \, , \\
\end{cases}
\end{equation}
if
\begin{equation}
\label{eq:condInv}
\left[{\cal H}(\bar{t},\bar{q}(\bar{t}),\bar{u}(\bar{t}),\bar{p}(\bar{t}),\lambda)
-\bar{p}(\bar{t}) \cdot  {_{\bar{a}}D_{\bar{t}}}^\alpha
\bar{q}(\bar{t})\right] d\bar{t}
=\left[{\cal H}(t,q(t),u(t),p(t),\lambda)-p(t)
\cdot {_aD_t^\alpha} q(t)\right] dt \, .
\end{equation}
\end{definition}

The next result provides an extension of Noether's
theorem to the wider class of fractional isoperimetric
optimal control problems.

\begin{theorem}(Noether's theorem in Hamiltonian form)
\label{thm:mainResult:FDA06}
If \eqref{eq:COA1} is variationally invariant,
in the sense of Definition~\ref{def:inv:gt1}, then
\begin{equation}
\label{eq:tndf:CO}
\mathcal{D}_{t}^{\alpha}\biggl({\cal H}(t,q(t),u(t),p(t),\lambda)
- \left(1 - \alpha\right) p(t) \cdot {_aD_t^\alpha} q(t),
\tau(t,q(t))\biggr)
- \mathcal{D}_{t}^{\alpha}\left(p(t), \xi(t,q(t))\right) = 0
\end{equation}
along any fractional isoperimetric Pontryagin extremal
$(q(\cdot),u(\cdot),p(\cdot),\lambda)$ of problem
\eqref{eq:COA}--\eqref{eq:COIRL}.
\end{theorem}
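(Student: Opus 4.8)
The plan is to recognize the augmented functional \eqref{eq:COA1} as a fractional isoperimetric variational problem of exactly the type already settled in Theorem~\ref{theo:TNfRL}, only now carried out in the enlarged collection of dependent variables $(q,u,p)$. Writing its integrand as
\[
\tilde{F}\bigl(t,q,u,p,{_aD_t^\alpha}q\bigr)
:= \mathcal{H}(t,q,u,p,\lambda) - p\cdot{_aD_t^\alpha}q \, ,
\]
I note that among these variables only $q$ enters through a fractional derivative, while $u$ and $p$ appear algebraically. First I would repeat verbatim the time-reparameterization device from the proof of Theorem~\ref{theo:TNfRL}: the Lipschitzian change $\sigma\mapsto t(\sigma)=\sigma f(\delta)$ with $t'_\sigma=1$ at $\delta=0$ renders \eqref{eq:COA1} autonomous and reduces the invariance requirement \eqref{eq:condInv} to invariance without transformation of the independent variable, to which the necessary condition \eqref{eq:cnsidf} applies.

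The central computation is then to differentiate the resulting invariance identity with respect to $\varepsilon$ at $\varepsilon=0$ and set $\delta=0$. This yields a sum of four contributions, one for each generator $\tau,\xi,\varrho,\varsigma$ of \eqref{eq:trf:inf}. The two algebraic contributions vanish on a Pontryagin extremal of Theorem~\ref{th:AG}: the $u$-variation is multiplied by $\partial_3\tilde{F}=\partial_3\mathcal{H}$, which is zero by the stationary condition, while the $p$-variation is multiplied by $\partial_4\tilde{F}=\varphi-{_aD_t^\alpha}q$, which is zero by the control system \eqref{eq:sitRL} (equivalently, the first Hamiltonian equation). Hence only the $q$- and $\tau$-terms survive, and these are precisely the quantities governed by the Lagrangian theorem.

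To close I would use the second Hamiltonian equation ${_tD_b^\alpha}p=\partial_2\mathcal{H}$, which is exactly the fractional Euler--Lagrange equation \eqref{eq:eldf} for $\tilde{F}$ (since $\partial_2\tilde{F}=\partial_2\mathcal{H}$ and the derivative of $\tilde{F}$ in its fractional-velocity slot is $-p$), in order to recast the $q$-term as a $\mathcal{D}_t^\alpha$-operator exactly as in the proof of Theorem~\ref{theo:tnadf1}. Substituting the identifications $F\leftrightarrow\mathcal{H}-p\cdot{_aD_t^\alpha}q$ and $\partial_3 F\leftrightarrow-p$ into the conservation law \eqref{eq:tndf} turns its $\tau$-coefficient into $\mathcal{H}-p\cdot{_aD_t^\alpha}q-\alpha(-p)\cdot{_aD_t^\alpha}q=\mathcal{H}-(1-\alpha)p\cdot{_aD_t^\alpha}q$ and its $\xi$-coefficient into $-p$, which is precisely \eqref{eq:tndf:CO}.

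I expect the principal difficulty to be bookkeeping rather than anything conceptually deep. One must check that the extra control and costate variables are treated consistently through the reparameterization --- in particular that the $\alpha$-dependent scaling factor produced by the transformed fractional derivative, which is responsible for the $(1-\alpha)$ coefficient, couples correctly with the $-p\cdot{_aD_t^\alpha}q$ term --- and that the generators $\varrho$ and $\varsigma$ genuinely decouple. Once the stationary condition and the control system are invoked to eliminate them, the surviving structure is identical to the purely variational setting and the result is inherited directly from Theorem~\ref{theo:TNfRL}.
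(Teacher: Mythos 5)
Your proposal follows exactly the paper's own route: the published proof is a single sentence stating that \eqref{eq:tndf:CO} follows by applying Theorem~\ref{theo:TNfRL} to the equivalent functional \eqref{eq:COA1}, and your identifications $F\leftrightarrow\mathcal{H}-p\cdot{_aD_t^\alpha}q$, $\partial_3F\leftrightarrow-p$ reproduce the stated coefficients correctly. You merely supply the bookkeeping (elimination of the $u$- and $p$-variations via the stationarity condition and the control system) that the paper leaves implicit, so this is the same argument, worked out in more detail.
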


\begin{proof}
The fractional isoperimetric conservation law
\eqref{eq:tndf:CO} in the sense of Riemann--Liouville
is obtained by applying Theorem~\ref{theo:TNfRL}
to the equivalent functional \eqref{eq:COA1}.
\end{proof}

\begin{remark}
When $\alpha \rightarrow 1$, one gets from Theorem~\ref{thm:mainResult:FDA06}
the Noether-type theorem associated with the classical
isoperimetric optimal control problem \cite[Theorem $4.1$]{MR1901565}:
invariance under a one-parameter family of infinitesimal
transformations \eqref{eq:trf:inf} implies that
\begin{equation*}
{\cal H}(t,q(t),u(t),p(t),\lambda)\tau(t,q(t))-p(t)\cdot
\xi(t,q(t)) = constant
\end{equation*}
along all the Pontryagin extremals.
\end{remark}


\section{Examples}
\label{sub:sec:ex}

We illustrate our results with the help of two
fractional isoperimetric problems. Example~\ref{ex:2}
considers a nonautonomous fractional isoperimetric problem
of the calculus of variations; Example~\ref{cor:FOCP:CO}
the autonomous optimal control isoperimetric problem.

\begin{example}
\label{ex:2}
Let $\alpha$ be a given number in the interval $(0,1)$.
Consider the following fractional isoperimetric problem:
\begin{equation*}
\begin{gathered}
\int_0^1(t^4+({_0D_t^\alpha}y)^2)dt \longrightarrow \min,\\
 \int_0^1 t^2{_0D_t^\alpha}y \,dt=\frac{1}{5},\\
y(0) = 0 \, , \quad y(1) = \frac{2}{2\alpha+3\alpha^2+\alpha^3} \, .
\end{gathered}
\end{equation*}
The augmented Lagrangian is
\begin{equation}
\label{ex11}
F(t,y,{_0D_t^\alpha}y)
=t^4+({_0D_t^\alpha}y )^2-\lambda \, t^2 {_0D_t^\alpha}y
\end{equation}
and in \cite{CD:RiRuDe:2011} it is proved that
\begin{equation}
\label{sol:ex:fip}
y(t) = \frac{1}{\Gamma(\alpha)}\int_0^t\frac{x^2}{(t-x)^{1-\alpha}}dx
= \displaystyle\frac{1}{\Gamma(\alpha)} \,
\frac{2t^{\alpha+2}}{2\alpha+3\alpha^2+\alpha^3}
\end{equation}
is an extremal if $\lambda=2$ and
\begin{equation}
\label{ex1}
{_0D_t^\alpha}y=t^2.
\end{equation}
It is easy to check the validity of our Theorem~\ref{theo:TNfRL} for this problem:
take $\xi = 1$, $\tau = 1$, and use \eqref{ex11}--\eqref{sol:ex:fip}--\eqref{ex1}
in \eqref{eq:tndf} to obtain ${\mathcal{D}_t^\alpha}(0,1)=0$.
\end{example}

Theorem~\ref{thm:mainResult:FDA06} gives an
interesting result for autonomous fractional problems.

\begin{example}
\label{cor:FOCP:CO}
Consider the autonomous fractional isoperimetric
optimal control problem, \textrm{i.e.}, the case when functions $L$,
$\varphi$ and $g$ of \eqref{eq:COA}--\eqref{CT2}
do not depend explicitly on the independent variable:
\begin{gather}
I[q(\cdot),u(\cdot)] =\int_a^b L\left(q(t),u(t)\right) dt
\longrightarrow \min \, , \label{eq:FOCP:CO}\\
_aD_t^\alpha
q(t)=\varphi\left(q(t),u(t)\right)\label{eq:FOCP:CO10}\,,\\
\int_a^bg_{j}(q(t),u(t))=l_j\,. \label{eq:45}
\end{gather}
We will show that for the fractional problem
\eqref{eq:FOCP:CO}--\eqref{eq:45} one has
\begin{equation}
\label{eq:frac:eng}
_aD_t^\alpha \left[
{\cal H}(t,q(t),u(t),p(t),\lambda) +
\left(\alpha-1\right) p(t) \cdot {_aD_t^\alpha} q(t)\right] = 0
\end{equation}
along any isoperimetric fractional Pontryagin extremal
$(q(\cdot),u(\cdot),p(\cdot),\lambda)$.
Indeed, as the Hamiltonian ${\cal H}$ does not depend explicitly on the
independent variable $t$, we can easily see that
\eqref{eq:FOCP:CO}--\eqref{eq:45} is invariant
under translation of the time variable:
the condition of invariance \eqref{eq:condInv} is satisfied with
$\bar{t}(t) = t+\varepsilon$,
$\bar{q}(t) = q(t)$,
$\bar{u}(t) = u(t)$,
and $\bar{p}(t) = p(t)$. Indeed,
given that $d\bar{t} = dt$, the invariance
condition \eqref{eq:condInv} is verified if
${_{\bar{a}}D_{\bar{t}}^\alpha} \bar{q}(\bar{t}) =
{_aD_t^\alpha} q(t)$. This is true because
\begin{equation*}
\begin{split}
_{\bar{a}} D_{\bar{t}}^\alpha \bar{q}(\bar{t})
&= \frac{1}{\Gamma(n-\alpha)}\left(\frac{d}{d\bar{t}}\right)^{n}
\int_{\bar{a}}^{\bar{t}} (\bar{t}-\theta)^{n-\alpha-1}\bar{q}(\theta)d\theta \\
&= \frac{1}{\Gamma(n-\alpha)}\left(\frac{d}{dt}\right)^{n}
\int_{a + \varepsilon}^{t+\varepsilon} (t + \varepsilon-\theta)^{n-\alpha-1}\bar{q}(\theta)d\theta \\
&= \frac{1}{\Gamma(n-\alpha)}\left(\frac{d}{dt}\right)^{n}
\int_{a}^{t} (t-s)^{n-\alpha-1}\bar{q}(s + \varepsilon)ds \\
&= {_{a}D_{t}}^\alpha \bar{q}(t + \varepsilon) = {_{a}D_{t}}^\alpha \bar{q}(\bar{t}) \\
&= {_{a}D_{t}}^\alpha q(t) \, .
\end{split}
\end{equation*}
Using the notation in \eqref{eq:trf:inf}, we have
$\tau = 1$, $\xi=\varrho=\varsigma=0$.
From Theorem~\ref{thm:mainResult:FDA06} we arrive to
the intended equality \eqref{eq:frac:eng}.
\end{example}

The Example~\ref{cor:FOCP:CO} shows that in contrast with
the classical autonomous isoperimetric problem of optimal control,
for \eqref{eq:FOCP:CO}--\eqref{eq:45}
the Hamiltonian ${\cal H}$ does not define a conservation law.
Instead of the classical equality
$\frac{d}{dt}\left({\cal H}\right)=0$, we have
\begin{equation}
\label{eq:ConsHam:alpha}
_aD_t^\alpha \left[
{\cal H} + \left(\alpha-1\right) p(t)
\cdot {_aD_t^\alpha} q(t) \right] = 0 \, ,
\end{equation}
\textrm{i.e.}, fractional conservation
of the Hamiltonian ${\cal H}$ plus a quantity that depends on
the fractional order $\alpha$ of differentiation.
This seems to be explained by violation of the homogeneity
of space-time caused by the fractional
derivatives, when $\alpha\neq 1$. If $\alpha=1$, then
we obtain from \eqref{eq:ConsHam:alpha} the classical result:
the Hamiltonian ${\cal H}$ is preserved along all the
isoperimetric Pontryagin extremals.


\section*{Acknowledgments}

This work was supported by {\it FEDER} funds through
{\it COMPETE} --- Operational Programme Factors of Competitiveness
(``Programa Operacional Factores de Competitividade'')
and by Portuguese funds through the
{\it Center for Research and Development
in Mathematics and Applications} (University of Aveiro)
and the Portuguese Foundation for Science and Technology
(``FCT --- Funda\c{c}\~{a}o para a Ci\^{e}ncia e a Tecnologia''),
within project PEst-C/MAT/UI4106/2011
with COMPETE number FCOMP-01-0124-FEDER-022690.
Partially presented at The 5th Symposium
on Fractional Differentiation and its Applications (FDA'12),
held in Hohai University, Nanjing, May 14-17, 2012.
Frederico was also supported by the FCT post-doc
fellowship SFRH/BPD/51455/2011,
from the program {\it Ci\^{e}ncia Global};
Torres by FCT through the project PTDC/MAT/113470/2009
and by EU funding under the
7th Framework Programme FP7-PEOPLE-2010-ITN,
grant agreement number 264735-SADCO.
The authors are grateful to two anonymous referees
for valuable comments and suggestions.




\begin{thebibliography}{99}

\bibitem{CD:RiRuDe:2011}
R. Almeida, R. A. C. Ferreira\ and\ D. F. M. Torres:
Isoperimetric problems of the calculus of variations with fractional derivatives,
{\em Acta Math. Sci. Ser. B Engl. Ed.\/} {\bf 32}, 619 (2012).
{\tt arXiv:1105.2078}

\bibitem{MyID:136}
R. Almeida\ and\ D. F. M. Torres:
Isoperimetric problems on time scales with nabla derivatives,
{\em J. Vib. Control\/} {\bf 15}, 951 (2009).
{\tt arXiv:0811.3650}

\bibitem{Atanack}
T. M. Atanackovi\'c, S. Konjik, S. Pilipovi\'c\ and\ S. Simi\'c:
Variational problems with fractional derivatives: invariance conditions and Noether's theorem,
{\em Nonlinear Anal.\/} {\bf 71}, 1504 (2009).
{\tt arXiv:1101.2962}

\bibitem{Rev:01}
T. M. Atanackovic\ and\ S. Pilipovic:
Hamilton's principle with variable order fractional derivatives,
{\em Fract. Calc. Appl. Anal.\/} {\bf 14}, 94 (2011).

\bibitem{Rev:02}
D. Baleanu, K. Diethelm, E. Scalas\ and\ J. J. Trujillo:
{\em Fractional calculus\/},
World Scientific Publishing Co. Pte. Ltd., Hackensack, NJ 2012.

\bibitem{MyID:106}
Z. Bartosiewicz\ and\ D. F. M. Torres:
Noether's theorem on time scales,
{\em J. Math. Anal. Appl.\/} {\bf 342}, 1220 (2008).
{\tt arXiv:0709.0400}

\bibitem{Rev:03}
L. C. Chen, F. Hu\ and\ W.Q. Zhu:
Stochastic dynamics and fractional optimal control of quasi integrable Hamiltonian
systems with fractional derivative damping,
{\em Fract. Calc. Appl. Anal.\/} {\bf 16}, 189 (2013).

\bibitem{MR2421931}
R. A. El-Nabulsi\ and\ D. F. M. Torres:
Fractional actionlike variational problems,
{\em J. Math. Phys.\/} {\bf 49}, 053521 (2008).
{\tt arXiv:0804.4500}

\bibitem{MyID:125}
R. A. C. Ferreira\ and\ D. F. M. Torres:
Isoperimetric problems of the calculus of variations on time scales,
in {\em Nonlinear analysis and optimization II. Optimization\/}, 123--131,
Contemp. Math., 514 Amer. Math. Soc., Providence, RI 2010.
{\tt arXiv:0805.0278}

\bibitem{MyID:062}
G. S. F. Frederico\ and\ D. F. M. Torres:
Nonconservative Noether's theorem in optimal control,
{\em Int. J. Tomogr. Stat.\/} {\bf 5}, 109 (2007).
{\tt arXiv:math/0512468}

\bibitem{CD:FredericoTorres:2007}
G. S. F. Frederico\ and\ D. F. M. Torres:
A formulation of Noether's theorem for fractional problems
of the calculus of variations,
{\em J. Math. Anal. Appl.\/} {\bf 334}, 834 (2007).
{\tt arXiv:math/0701187}

\bibitem{CD:FredericoTorres:2006}
G. S. F. Frederico\ and\ D. F. M. Torres:
Fractional conservation laws in optimal control theory,
{\em Nonlinear Dynam.\/} {\bf 53}, 215 (2008).
{\tt arXiv:0711.0609}

\bibitem{CD:FredericoTorres:2010}
G. S. F. Frederico\ and\ D. F. M. Torres:
Fractional Noether's theorem in the Riesz-Caputo sense,
{\em Appl. Math. Comput.\/} {\bf 217}, 1023 (2010).
{\tt arXiv:1001.4507}

\bibitem{CD:Gel:1963}
I. M. Gelfand\ and\ S. V. Fomin:
{\em Calculus of variations\/}, Prentice-Hall,
Englewood Cliffs, N.J. 1963.

\bibitem{Rev:04}
M. A. E. Herzallah\ and\ D. Baleanu:
Fractional Euler-Lagrange equations revisited,
{\em Nonlinear Dynam.\/} {\bf 69}, 977 (2012).

\bibitem{CD:Hilfer:2000}
R. Hilfer:
{\em Applications of fractional calculus in physics\/},
World Sci. Publishing, River Edge, NJ 2000.

\bibitem{Rev:05}
H. Jafari, H. Tajadodi\ and\ D. Baleanu:
A modified variational iteration method for solving
fractional Riccati differential equation by Adomian polynomials,
{\em Fract. Calc. Appl. Anal.\/} {\bf 16}, 109 (2013).

\bibitem{Kilbas}
A. A. Kilbas, H. M. Srivastava\ and\ J. J. Trujillo:
{\em Theory and applications of fractional differential equations\/},
Elsevier, Amsterdam 2006.

\bibitem{Virginia94}
V. Kiryakova:
{\em Generalized fractional calculus and applications\/},
Longman Sci. Tech., Harlow 1994.

\bibitem{Virginia08}
V. Kiryakova:
A brief story about the operators of the generalized fractional calculus,
{\em Fract. Calc. Appl. Anal.\/} {\bf 11}, 203 (2008).

\bibitem{Rev:06}
M. Klimek\ and\ M. Lupa:
Reflection symmetric formulation of generalized fractional variational calculus,
{\em Fract. Calc. Appl. Anal.\/} {\bf 16}, 243 (2013).

\bibitem{MR84m:49002}
G. Leitmann:
{\em The Calculus of Variations and Optimal Control\/},
Plenum Press, New York 1981.

\bibitem{MyID:217}
A. B. Malinowska\ and\ D. F. M. Torres:
Fractional calculus of variations for a combined Caputo derivative,
{\em Fract. Calc. Appl. Anal.\/} {\bf 14}, 523 (2011).
{\tt arXiv:1109.4664}

\bibitem{MyID:206}
A. B. Malinowska\ and\ D. F. M. Torres:
Multiobjective fractional variational calculus
in terms of a combined Caputo derivative,
{\em Appl. Math. Comput.\/} {\bf 218}, 5099 (2012).
{\tt arXiv:1110.6666}

\bibitem{book:frac}
A. B. Malinowska\ and\ D. F. M. Torres:
{\em Introduction to the fractional calculus of variations\/},
Imp. Coll. Press, London 2012.

\bibitem{MyID:174}
N. Martins\ and\ D. F. M. Torres:
Noether's symmetry theorem for nabla problems
of the calculus of variations,
{\em Appl. Math. Lett.\/} {\bf 23}, 1432 (2010).
{\tt arXiv:1007.5178}

\bibitem{MyID:207}
T. Odzijewicz, A. B. Malinowska\ and\ D. F. M. Torres:
Fractional variational calculus with classical and combined Caputo derivatives,
{\em Nonlinear Anal.\/} {\bf 75}, 1507 (2012).
{\tt arXiv:1101.2932}

\bibitem{MyID:226}
T. Odzijewicz, A. B. Malinowska\ and\ D. F. M. Torres:
Generalized fractional calculus with applications
to the calculus of variations,
{\em Comput. Math. Appl.\/} {\bf 64}, 3351 (2012).
{\tt arXiv:1201.5747}

\bibitem{CD:Podlubny:1999}
I. Podlubny:
{\em Fractional differential equations\/},
Academic Press, San Diego, CA 1999.

\bibitem{MyID:221}
S. Pooseh, R. Almeida\ and\ D. F. M. Torres:
Expansion formulas in terms of integer-order derivatives
for the Hadamard fractional integral and derivative,
{\em Numer. Funct. Anal. Optim.\/} {\bf 33}, 301 (2012).
{\tt arXiv:1112.0693}

\bibitem{MyID:225}
S. Pooseh, R. Almeida\ and\ D. F. M. Torres:
Approximation of fractional integrals by means of derivatives,
{\em Comput. Math. Appl.\/} {\bf 64}, 3090 (2012).
{\tt arXiv:1201.5224}

\bibitem{CD:Riewe:1996}
F. Riewe:
Nonconservative Lagrangian and Hamiltonian mechanics,
{\em Phys. Rev. E (3)\/} {\bf 53}, 1890 (1996).

\bibitem{CD:Riewe:1997}
F. Riewe:
Mechanics with fractional derivatives,
{\em Phys. Rev. E (3)\/} {\bf 55}, 3581 (1997).

\bibitem{CD:JMS:Torres:2002a}
D. F. M. Torres:
On the Noether theorem for optimal control,
{\em Eur. J. Control\/} {\bf 8}, 56 (2002).

\bibitem{Torres:2004}
D. F. M. Torres:
Proper extensions of Noether's symmetry theorem
for nonsmooth extremals of the calculus of variations,
{\em Commun. Pure Appl. Anal.\/} {\bf 3}, 491 (2004).

\bibitem{MyID:028}
D. F. M. Torres:
Carath\'eodory equivalence, Noether theorems,
and Tonelli full-regularity in the calculus
of variations and optimal control,
{\em J. Math. Sci. (N. Y.)\/} {\bf 120}, 1032 (2004).
{\tt arXiv:math/0206230}

\bibitem{MR1901565}
D. F. M. Torres:
A Noether theorem on uniprovable conservation laws
for vector-valued optimization problems in control theory,
{\em Georgian Math. J.\/} {\it 13}, 173 (2006).
{\tt arXiv:math/0411173}

\bibitem{CD:Bruce:2004}
B. van Brunt:
{\em The Calculus of Variations\/},
Springer-Verlag, New-York 2004.

\end{thebibliography}
\end{document}